\theoremstyle{plain}
\newtheorem{teo}{Theorem}[section]
\newtheorem{cor}[teo]{Corollary}
\newtheorem{prop}[teo]{Proposition}
\newtheorem{rmk}[teo]{Remark}
\theoremstyle{definition}
\newtheorem{df}[teo]{Definition}
\newtheorem{exa}[teo]{Example}
\DeclareMathOperator{\dist}{dist}
\DeclareMathOperator{\card}{card}
\newcommand{\Bigcap}{\textstyle\operatorname*{\textstyle\bigcap}}
\newcommand{\Bigcup}{\textstyle\operatorname*{\textstyle\bigcup}}
\newcommand{\U}{{\mathcal U}}
\newcommand{\V}{{\mathcal V}}
\newcommand{\Z}{\mathbb Z}
\newcommand{\N}{\mathbb N}
\renewcommand{\epsilon}{\varepsilon}
\title{Expansive Homeomorphisms on non-Hausdorff Spaces}
\author{M. Achigar, A. Artigue and I. Monteverde}
\begin{document}

\maketitle

\begin{abstract}
  We study expansive dynamical systems from the viewpoint of general topology. We introduce the notions of orbit and refinement expansivity on topological spaces extending expansivity in the compact metric setting. Examples are given on non-Hausdorff compact spaces. Topological properties are studied in relation to separability axioms, metrizability and uniform expansivity.
\end{abstract}

\section{Introduction}

 \par\noindent\par Given a compact metric space $(X,\dist)$ a homeomorphism $f\colon X\to X$ is \emph{expansive} \cite{Utz} (or \emph{metric expansive}) if there is $\delta>0$ such that if $\dist(f^n(x),f^n(y))<\delta$ for all $n\in\Z$ then $x=y$. This definition was stated from a topological viewpoint, i.e. without mentioning the metric, in for example \cites{Br60,Fried,KR}. In these papers their hypothesis allowed them to prove the metrizability of the space, and consequently the concept they consider is in fact metric expansivity. The purpose of the present paper is to investigate topological definitions of expansivity with examples on non-metrizable topologies and to extend known result of metric expansivity to this topological setting.

 \par We introduce two of such definitions. In order to explain the first one let us consider a metric expansive homeomorphism $f$ as above. Since $X$ is compact, there are $x_1,\dots,x_l\in X$ such that $X=\Bigcup_{i=1}^l B_{\delta/2}(x_i)$, where $B_r(p)$ is the open ball of radius $r$ centered at $p$. If $U_i=B_{\delta/2}(x_i)$ for $i=1,\dots,l$ and $f$ is expansive we have constructed an open cover $U_1,\dots,U_l$ satisfying: if $x\neq y$ then there is $n\in\Z$ such that $\{f^n(x),f^n(y)\}\nsubseteq U_i$ for all $i=1,\dots,l$. We use this property of expansive homeomorphisms to extend the definition for non-metric topological spaces. If a covering $U_1,\dots,U_l$ as above exists we will say that $f$ is \emph{orbit expansive}. A similar idea was previously considered in \cite{KR} but assuming that the space is Hausdorff. In this case, as we will show \textcolor{black}{in Theorem \ref{teoMetrico}}, the orbit expansivity coincides with the metric expansivity. We will give examples of orbit expansive homeomorphisms on non-Hausdorff compact spaces, see Examples \ref{exaNonHausdorff} and \ref{exadupli}.

 \par Let us explain the second definition considering again a metric expansive homeomorphism $f$. Consider the open cover of small balls $\mathcal U=\{U_1,\dots,U_l\}$ defined above. Then we can prove that for every open cover $\mathcal V$ and every sequence $(k_j)_{j\in\Z}\in\{1,\dots,l\}^{\Z}$ there exists an $N\in\N$ such that $\bigcap_{|j|\leq N}f^j(U_{k_j})$ is contained in some open set of $\mathcal V$. In the topological setting this definition will be called \emph{refinement expansivity}. This concept is interesting because it does not mention the metric neither points, it is stated in terms of the dynamics of open covers of the space. This allows us to give examples on non-$T_1$ spaces, see Examples \ref{exaNoT0} and \ref{exaNoT1}. But, assuming that the space is $T_1$ we prove in Theorem \ref{teoRefT1Cov} that refinement expansivity implies orbit expansivity. As in the metric case \cite{Br62} we prove, see Theorem \ref{ref=>unifref}, a uniform version of this expansivity.

\vspace{3mm}

 \par The paper is organized as follows. In Section \ref{secOexp} we introduce orbit expansivity. We relate our definition with the concept of generators of \cite{KR}. Also, a definition via an isolating neighborhood of the diagonal of $X\times X$ is considered. We show that if a space $X$ admits an orbit expansive homeomorphism then $X$ is $T_1$. In Theorem \ref{teoMetrico} we show that if a compact Hausdorff space admits an orbit expansive homeomorphism then the space is metrizable. As a corollary we obtain that the compact unit square with the lexicographic topology does not admit orbit expansive homeomorphisms. In Example \ref{exaNonHausdorff} we give an example of a compact non-Hausdorff space admitting an orbit expansive homeomorphism. We also prove that no infinite set with the cofinite topology admits orbit expansive homeomorphisms. Other properties are proved in relation with the cardinality of the set of periodic points and the expansivity of the powers of the orbit expansive homeomorphisms.

 \par In Section \ref{secRefExp} we introduce the notion of refinement expansivity. We start giving examples showing that it is natural to assume that our space is $T_1$. In Section \ref{secUnifExp} we introduce uniform refinement expansivity and we prove that this definition coincides with refinement expansivity. This allows us to conclude that if a space $X$ admits a refinement expansive homeomorphism then $X$ is compact, which is an important difference with orbit expansivity. Then, we study the restriction to invariant subspaces via the extension closed property from \cite{Harris}. In Section \ref{secRefImpOrb} we prove that on $T_1$ spaces refinement expansivity implies orbit expansivity. In Example \ref{exadupli} we show that a $T_1$ but non-Hausdorff topological space may admit a refinement expansive homeomorphism.

\vspace{3mm}

 \par We thank Damián Ferraro, Armando Treibich and José Vieitez for useful conversations on these topics.

\section{Orbit expansivity}\label{secOexp}

 \par\noindent\par Let $(X,\tau)$ be a topological space and consider $f\colon X\to X$ a homeomorphism. We do not assume compacity of $X$. For a set $A\subseteq X$ and a cover $\mathcal C$ of $X$ we write $A\prec\mathcal C$ if there exists $C\in\mathcal C$ such that $A\subseteq C$. If $\mathcal A$ is a family of subsets of $X$ then $\mathcal A\prec \mathcal C$ means that $A\prec\mathcal C$ for all $A\in \mathcal A$. If in addition $\mathcal A$ and $\mathcal C$ are coverings of $X$ we say that $\mathcal A$ is a \emph{refinement} of $\mathcal C$.

\begin{df}
  We say that $f$ is \emph{orbit expansive} if there is a finite open cover $\mathcal U=\{U_1,\dots,U_l\}$ of $X$ such that if $\{f^n(x),f^n(y)\}\prec\mathcal U$ for all $n\in\Z$ then $x=y$. In this case we call $\mathcal U$ an \emph{o-expansive covering}.
\end{df}

 \par The notation suggests to think of $\{f^n(x),f^n(y)\}\prec\mathcal U$ as $$\dist(f^n(x),f^n(y))<\delta$$ in the metric case.

\begin{rmk}\label{dfexpcard}
  It is easy to prove that $\mathcal U$ is an o-expansive covering if and only if
    $$\card\Bigl(\bigcap_{j\in\Z}f^j(U_{k_j})\Bigr)\leq1\text{ for all }(k_j)_{j\in\Z}\in\{1,\dots,l\}^{\Z},$$
  where $\card$ stands for the cardinality of the set. In \cite{KR} a related concept is considered that they call \emph{generator}.
\end{rmk}

\begin{rmk}
  Notice that if the empty set $\emptyset$ is in an o-expansive covering $\mathcal U$ then $\mathcal U\setminus\{\emptyset\}$ is an o-expansive covering too. It is also easy to see that if $X\in\mathcal U$ then $\card(X)=1$.
\end{rmk}

 \par As usual, we define the \emph{diagonal} of $X\times X$ as 
   $$\Delta=\{(x,x):x\in X\}.$$

\begin{prop}\label{propDiagAislada} 
  Let $f\colon X\to X$ be a homeomorphism on a compact space $X$. The following statements are equivalent: 
  \begin{enumerate}
    \item $f$ is orbit expansive,
    \item there is a neighborhood $U\subset X\times X$ of the diagonal such that if $x\neq y$ then there is $n\in\Z$ such that $(f^n(x),f^n(y))\notin U$.
  \end{enumerate}
\end{prop}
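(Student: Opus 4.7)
The plan is to prove the two implications separately, using the product-topology structure and compactness only for the harder direction.

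For $(1)\Rightarrow(2)$, I would take an o-expansive covering $\mathcal U=\{U_1,\dots,U_l\}$ and simply set
$$U=\bigcup_{i=1}^{l} U_i\times U_i \subseteq X\times X.$$
This is open, and since $\mathcal U$ covers $X$, for every $x\in X$ some $U_i$ contains $x$, hence $(x,x)\in U_i\times U_i\subseteq U$, so $U$ is a neighborhood of $\Delta$. If $(f^n(x),f^n(y))\in U$ for every $n\in\Z$, then for each $n$ there is some index $i_n$ with $f^n(x),f^n(y)\in U_{i_n}$, i.e.\ $\{f^n(x),f^n(y)\}\prec\mathcal U$, which by orbit expansivity forces $x=y$. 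Taking the contrapositive gives (2).

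For $(2)\Rightarrow(1)$, suppose $U$ is an open neighborhood of $\Delta$ with the stated property. I want to cover $X$ by finitely many open sets whose ``squares'' sit inside $U$. For each $x\in X$, since $(x,x)\in U$ and the rectangles form a basis of the product topology, pick open sets $A_x,B_x\subseteq X$ with $(x,x)\in A_x\times B_x\subseteq U$ and set $V_x=A_x\cap B_x$; then $V_x$ is an open neighborhood of $x$ with $V_x\times V_x\subseteq U$. By compactness of $X$, extract a finite subcover $\mathcal U=\{V_{x_1},\dots,V_{x_l}\}$. If $\{f^n(x),f^n(y)\}\prec\mathcal U$ for every $n\in\Z$, then for each $n$ we have $(f^n(x),f^n(y))\in V_{x_{i_n}}\times V_{x_{i_n}}\subseteq U$, and hypothesis (2) yields $x=y$. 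Thus $\mathcal U$ is an o-expansive covering.

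The only mildly delicate step is the choice of the symmetric ``square'' neighborhoods $V_x\times V_x\subseteq U$ in the second implication; the rest is a straightforward repackaging. Compactness is essential exactly there, to pass from the pointwise neighborhoods $V_x$ to a finite open cover, matching the finite cover required in the definition of orbit expansivity.
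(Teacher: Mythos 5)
Your proof is correct and follows essentially the same route as the paper's: the union $\bigcup_i U_i\times U_i$ for $(1)\Rightarrow(2)$, and for $(2)\Rightarrow(1)$ the choice of square neighborhoods $V_x\times V_x\subseteq U$ followed by a compactness extraction of a finite subcover (the paper phrases this via compactness of the diagonal, which is the same argument). You merely fill in the details the paper leaves implicit, such as how the basis of the product topology yields the symmetric neighborhoods $V_x$.
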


\begin{proof}
  (1 $\to$ 2). Given an o-expansive covering $U_1,\dots U_l$, consider $U=U_1\times U_1\cup\dots\cup U_l\times U_l$. It is easy to see that $U$ satisfies condition 2. 

  \par (2 $\to$ 1). Given $U$ consider for each $x\in X$ a neighborhood $U_x$ of $x$ such that $U_x\times U_x\subset U$. Since $X$ is compact, we know that the diagonal $\Delta$ is compact. Then there is a finite cover, say $U_{x_1}\times U_{x_1},\dots,U_{x_l}\times U_{x_l}$ for some finite set $\{x_1,\dots,x_l\}\subset X$. Then, taking $U_i=U_{x_i}$, for $i=1,\dots,l$, we obtain an o-expansive covering $\{U_1,\dots,U_l\}$.
\end{proof}

 \par Notice that implication (1 $\to$ 2) in the previous Proposition does not need the compacity of $X$. As usual, we say that $X$ is $T_1$ if given $x,y\in X$, $x\neq y$, then there are two open sets $U,V$ such that $x\in U$, $y\in V$, $x\notin V$ and $y\notin U$.

\begin{prop}\label{propCovExpT1}
  If $X$ admits an orbit expansive homeomorphism then $X$ is a $T_1$ space.
\end{prop}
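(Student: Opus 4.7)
The plan is to use the o-expansive cover directly to produce, for any two distinct points, open sets witnessing the $T_1$ axiom, after shifting by an appropriate power of $f$.

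Fix $x,y \in X$ with $x \neq y$, and let $\mathcal{U}=\{U_1,\dots,U_l\}$ be an o-expansive covering. First I would invoke orbit expansivity: since $x \neq y$, there must exist some $n \in \mathbb{Z}$ such that $\{f^n(x), f^n(y)\} \not\prec \mathcal{U}$, i.e.\ no single $U_i$ contains both $f^n(x)$ and $f^n(y)$. Set $a=f^n(x)$ and $b=f^n(y)$.

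Next, since $\mathcal{U}$ is a cover, pick $U_j \in \mathcal{U}$ with $a \in U_j$; the defining property of $n$ forces $b \notin U_j$. Symmetrically, pick $U_k \in \mathcal{U}$ with $b \in U_k$; then $a \notin U_k$. So $U_j$ and $U_k$ $T_1$-separate $a$ from $b$.

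Finally, I would pull these open sets back by the homeomorphism: $f^{-n}(U_j)$ is an open set containing $x$ but not $y$, and $f^{-n}(U_k)$ is an open set containing $y$ but not $x$. This gives the $T_1$ property. There is no real obstacle here; the proof is essentially a one-step translation from the defining condition of orbit expansivity, using only that the elements of $\mathcal{U}$ are open and that $f$ is a homeomorphism, together with the trivial observation that a cover contains some element around each point.
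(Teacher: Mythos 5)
Your proof is correct and follows essentially the same argument as the paper: take $n$ with $\{f^n(x),f^n(y)\}\nprec\mathcal U$, choose a cover element around each of $f^n(x)$ and $f^n(y)$, and pull back by $f^{-n}$. The paper merely writes out one of the two separating open sets and leaves the symmetric one implicit, so there is no substantive difference.
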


\begin{proof}
  Given distinct points $x$ and $y$ of $X$ take $n\in\Z$ such that $\{f^n(x),f^n(y)\}\nprec\mathcal U$ where $\mathcal U$ is an o-expansive covering. As $f^n(x)\in U$ for some $U\in\mathcal U$ we have $y\notin f^{-n}(U)$, $x\in f^{-n}(U)$. This proves that $X$ is $T_1$.
\end{proof}

\begin{rmk}
  If $X$ is finite and it admits an orbit expansive homeomorphism then $X$ has the discrete topology. 
  This follows by Proposition \ref{propCovExpT1}.
\end{rmk}

 \par Orbit expansivity is equivalent to the usual notion of expansivity if the space is Hausdorff, as the next result shows. Similar results, but with different definitions, were obtained in \cites{Br60,Fried,KR}. The closure of a set $V$ will be denoted as $\overline V$.

\begin{teo}\label{teoMetrico}
  If a compact Hausdorff topological space admits an orbit expansive homeomorphism then it is metrizable. 
\end{teo}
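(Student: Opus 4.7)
The plan is to reduce the problem to the classical theorem of Šneĭder: \emph{a compact Hausdorff space is metrizable if and only if the diagonal $\Delta$ is a $G_\delta$ subset of $X\times X$}. Once that characterization is in place, orbit expansivity provides a very clean way to exhibit $\Delta$ as a countable intersection of open sets.

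First I would invoke Proposition \ref{propDiagAislada}: since $X$ is compact and $f$ is orbit expansive, there is a neighborhood $U\subseteq X\times X$ of $\Delta$ such that for every $x\neq y$ there exists $n\in\Z$ with $(f^n(x),f^n(y))\notin U$. Passing to the interior if necessary, I will assume $U$ is open. I would then observe that
\[
\Delta = \bigcap_{n\in\Z}(f^n\times f^n)^{-1}(U).
\]
The inclusion $\subseteq$ is clear, because $\Delta$ is $(f^n\times f^n)$-invariant and $\Delta\subseteq U$; the inclusion $\supseteq$ is exactly the condition granted by Proposition \ref{propDiagAislada}. Since $f^n\times f^n$ is a homeomorphism of $X\times X$, each $(f^n\times f^n)^{-1}(U)$ is open, so $\Delta$ is a countable intersection of open sets, i.e.\ a $G_\delta$.

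Finally, I would apply the classical metrization criterion: a compact Hausdorff space whose diagonal is a $G_\delta$ is metrizable. This concludes the argument.

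The only potential obstacle is justifying, or citing, the classical $G_\delta$-diagonal theorem; the dynamical content of the proof is minimal once Proposition \ref{propDiagAislada} has been established, because expansivity feeds directly into the countable intersection describing $\Delta$. No Hausdorff hypothesis is used before the last step (it enters only through the metrization criterion, which requires it to separate points from closed sets using the $G_\delta$ structure).
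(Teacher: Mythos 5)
Your argument is correct, but it takes a genuinely different route from the one in the paper. The paper works directly with the o-expansive covering $\mathcal U$: using compactness and the Hausdorff property it shrinks $\mathcal U$ to a cover $\mathcal V$ whose closures still separate orbits, shows that the finite intersections of iterates of members of $\mathcal V$ form a countable basis, and then invokes Urysohn's metrization theorem for compact Hausdorff second countable spaces. You instead pass through Proposition \ref{propDiagAislada} (only the implication that does not need compactness), write
\[
\Delta=\bigcap_{n\in\Z}(f^n\times f^n)^{-1}(U)
\]
for an open neighborhood $U$ of the diagonal, and conclude that $\Delta$ is a $G_\delta$, so that the classical $G_\delta$-diagonal metrization theorem of \v{S}ne\u{\i}der (a compact Hausdorff space with $G_\delta$ diagonal is metrizable) finishes the proof. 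Both identities you need are verified correctly: the inclusion $\subseteq$ because $\Delta\subseteq U$ is invariant, and $\supseteq$ because a pair $(x,y)$ lying in every $(f^n\times f^n)^{-1}(U)$ violates expansivity unless $x=y$. The trade-off is one of self-containedness versus brevity: the paper's proof only needs Urysohn's theorem and in passing establishes second countability explicitly (a fact it reuses in spirit in Theorem \ref{teoRefT1Cov}), whereas your proof is shorter and makes the dynamical input essentially trivial, at the cost of citing a less elementary metrization criterion. Your closing observation that the Hausdorff hypothesis enters only through the metrization criterion is accurate and is a nice point; the paper's related remark about closed neighborhoods of the diagonal makes essentially the same observation from the other direction.
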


\begin{proof}
  Let $\mathcal U$ be an o-expansive covering. As $X$ is compact and Hausdorff we can take an open cover $\mathcal V=\{V_1,\dots,V_m\}$ such that $\{\overline V:V\in\mathcal V\}\prec\mathcal U$ and satifying
    $$\card\Bigl(\bigcap_{j\in\Z}f^j(\overline{V_{k_j}})\Bigr)\leq1\text{ for all }(k_j)_{j\in\Z}\in\{1,\dots,m\}^{\Z}.$$
  Then, given a point $x$ and a neighborhood $W$ of $x$, by a compacity argument, we can find an $N\in\N$ and a sequence $(k_j)_{j=-N}^N$ such that
    $$x\in\bigcap_{|j|\leq N}f^j(V_{k_j})\subseteq W.$$
  So the finite intersections of the elements of $\mathcal V$ and its iterates form a countable basis for the topology of $X$. Therefore, being compact and Hausdorff, we can apply Urysohn's Theorem (see for example \cite{HY}) to conclude that $X$ must be metrizable. 
\end{proof}

 \par The following remark is a simple corollary of Theorem \ref{teoMetrico}.

\begin{rmk}
  The unit square $X=[0,1]\times[0,1]$ with the topology induced by the lexicographic order does not admit orbit expansive homeomorphisms. This is because $X$ is compact and Hausdorff but it is not metrizable.
\end{rmk}

\begin{rmk}
  If there is a closed set $U\subset X\times X$ such that 
  $\Bigcap_{i\in\Z} (f\times f)^i(U)=\Delta$ then $\Delta$ is a closed set. This implies that $X$ is Hausdorff. 
  So, if the neighborhood $U$ of the diagonal of Proposition \ref{propDiagAislada} can be taken as a closed neighborhood then we conclude by Theorem \ref{teoMetrico} that $X$ is metric. 
  Consequently, the topological expansivity defined in \cite{DLRW} is metric expansivity if the space is compact.
\end{rmk}

\begin{rmk}
  If $X$ is a compact and Hausdorff space and $f$ is orbit expansive then for every metric $\dist$ defining the topology of $X$ there is $\delta>0$ such that if $x,y\in X$ and $x\neq y$ then there is $n\in\Z$ such that $\dist(f^n(x),f^n(y))\geq\delta$. In fact, given a metric $\dist$ defining the topology of $X$ 
  it suffices to take $\delta$ as the Lebesgue number of an o-expansive covering of $f$.
\end{rmk}

 \par Non-compact spaces may admit orbit expansive homeomorphisms, see the next example. In the following section we show that this is not the case for refinement expansivity.

\begin{exa}
\label{exaNoCompact}
  Consider the metric space $\Z$ with its usual metric, i.e., the discrete topology.  
  Define the homeomorphism $f\colon \Z\to \Z$ as $f(n)=n+1$ and the open sets
  $U_1=\{n\in\Z:n\geq 0\}$ and $U_2=\{n\in\Z:n<0\}$. 
  It is easy to see that $\mathcal U=\{U_1,U_2\}$ is an o-expansive cover for $f$.
\end{exa}

 \par Let us give an example of an orbit expansive homeomorphism on a non-Hausdorff space.

\begin{exa}
\label{exaNonHausdorff}
  Consider $f\colon\Z\to\Z$ the homeomorphism of Example \ref{exaNoCompact}. Define $X$ as a non-Hausdorff compactification of $\Z$ with two points $\infty_1$ and $\infty_2$ such that a basis of neighborhoods of $\infty_i$ is formed by the sets $U$, $\infty_i\in U$, with finite complement. We have that $f$ can be extended to a homeomorphism of $X$ by fixing each $\infty_i$. In this way we obtain an orbit expansive homeomorphism on a compact non-Hausdorff space. More details are given in Example \ref{exadupli}, where this example is generalized.
\end{exa}

 \par A well known example of a compact topological space that is $T_1$ but not Hausdorff is the cofinite topology on an infinite set. We show that such space admits no orbit expansive homeomorphisms.

\begin{prop}
  If $X$ is an infinite set with the cofinite topology then no homeomorphism of $X$ is orbit expansive.
\end{prop}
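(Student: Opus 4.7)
The plan is to argue by contradiction: suppose some homeomorphism $f\colon X\to X$ admits an o-expansive cover $\mathcal{U}=\{U_1,\dots,U_l\}$. After discarding empty members one may assume each $U_i$ is nonempty, so $F_i:=X\setminus U_i$ is finite (in the cofinite topology the nonempty opens are exactly the cofinite sets); moreover $\bigcap_i F_i=\emptyset$ because $\mathcal{U}$ covers $X$. Set $F:=\bigcup_i F_i$, a finite subset of $X$.

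First I would consider the set $A:=\{p\in X:f^n(p)\notin F\text{ for every }n\in\Z\}$. If $|A|\geq 2$, any two distinct points $x,y\in A$ satisfy $\{f^n(x),f^n(y)\}\subseteq U_1$ for every $n$, directly contradicting o-expansivity. Hence $|A|\leq 1$, so $X\setminus A=\bigcup_{n\in\Z}f^n(F)$ is a union of at most $|F|$ orbits of $f$. Thus $X$ is covered by at most $|F|+1$ orbits, which forces $X$ to be countable; since $X$ is infinite, at least one orbit $O$ of $f$ must be infinite.

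Next, fix a basepoint $p_0\in O$ and identify $O$ with $\Z$ via $n\mapsto f^n(p_0)$; under this identification $f|_O$ acts as the shift, and $F_i':=F_i\cap O$ is a finite subset of $\Z$ with $\bigcap_i F_i'=\emptyset$. Enclose $F':=\bigcup_i F_i'$ in a finite integer interval $[a,b]$ and choose $d>b-a$. Take $x:=p_0$ and $y:=f^d(p_0)$, distinct because $O$ is infinite. For each $n\in\Z$ the integers $n$ and $n+d$ differ by more than $\diam(F')$, so they cannot both lie in $F'$; assuming without loss of generality $n+d\notin F'$, the hypothesis $\bigcap_i F_i'=\emptyset$ supplies some $i$ with $n\notin F_i'$, and then $\{n,n+d\}\cap F_i'=\emptyset$, i.e., $\{f^n(x),f^n(y)\}\subseteq U_i$. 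Since this happens for every $n$, the pair $(x,y)$ contradicts the o-expansivity of $\mathcal{U}$.

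The main obstacle is the countable regime, where $A$ alone does not produce two points; the remedy is the orbit decomposition forced by $|A|\leq 1$, which reduces the problem to an elementary statement about the shift on $\Z$. There a shift $d$ larger than the diameter of $F'$ ensures that the two orbits of $x$ and $y$ never visit $F$ simultaneously, so the condition $\bigcap_i F_i=\emptyset$ alone suffices to find a $U_i$ containing each pair.
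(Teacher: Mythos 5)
Your proof is correct and follows essentially the same strategy as the paper: the complements $F_i$ are finite, two points whose orbits avoid $F$ immediately violate o-expansivity, and on an infinite orbit a translate by more than the width of $F$ produces a pair that never simultaneously visits $F$. The only cosmetic difference is that you extract the infinite orbit from the decomposition $X=A\cup\bigcup_n f^n(F)$ with $\card(A)\leq 1$, whereas the paper splits into the cases ``every point periodic'' versus ``some point non-periodic''; both routes are valid.
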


\begin{proof}
  Arguing by contradiction, suppose there exists an o-expansive covering $\{U_1,\dots,U_l\}$ for a homeomorphism $f\colon X\to X$. Consider the finite sets $A_i=X\setminus U_i$ and define $A=\Bigcup A_i$. If every point is periodic then there is an infinite set of points that never visit $A$, in contradiction with the orbit expansivity. Then, we can take a non-periodic point $x\in X$. Since $X$ is infinite, there is $N\geq 1$ such that $f^i(x)\notin A$ if $|i|\geq N$. The points $x$ and $y=f^{2N}(x)$ contradict the orbit expansivity because, for each $i\in\Z$ at most one of the points $f^i(x)$ and $f^i(y)$ is in $A$. This finishes the proof.
\end{proof}

 \par The following results extends well known properties of metric expansivity, see \cite{Utz}. They should be compared with what is obtained in the next section.

\begin{prop}
  Let $f\colon X\to X$ be a homeomorphism and $r\neq 0$ an integer. Then, $f$ is orbit expansive if and only if $f^r$ is.
\end{prop}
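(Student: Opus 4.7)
The plan is to prove both implications after a preliminary reduction. First I would observe that an o-expansive cover $\mathcal U$ for $f$ is automatically o-expansive for $f^{-1}$: since the index set $\Z$ is symmetric under negation, the family of pairs $\{\{f^n(x),f^n(y)\}\}_{n\in\Z}$ coincides with $\{\{(f^{-1})^n(x),(f^{-1})^n(y)\}\}_{n\in\Z}$. Replacing $f$ by $f^{-1}$ and $r$ by $|r|$ if necessary, it suffices to treat the case $r\geq 1$.

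For the direction ``$f^r$ orbit expansive $\Rightarrow f$ orbit expansive'' I would show that any o-expansive cover $\mathcal V$ for $f^r$ is still o-expansive for $f$. Indeed, if $\{f^n(x),f^n(y)\}\prec\mathcal V$ for all $n\in\Z$, then in particular $\{(f^r)^k(x),(f^r)^k(y)\}\prec\mathcal V$ for every $k\in\Z$, so $x=y$ by o-expansivity of $f^r$.

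For the converse, given an o-expansive cover $\mathcal U=\{U_1,\dots,U_l\}$ for $f$, I would refine it to a cover that detects every power of $f$ along multiples of $r$, by setting
$$\mathcal V=\Bigl\{\bigcap_{j=0}^{r-1}f^{-j}(U_{i_j}) \;:\; (i_0,\dots,i_{r-1})\in\{1,\dots,l\}^r\Bigr\}.$$
This is a finite family of open sets (each $f^{-j}$ is continuous), and it covers $X$: given $x\in X$, for each $0\leq j<r$ pick $i_j$ with $f^j(x)\in U_{i_j}$, so $x\in\bigcap_j f^{-j}(U_{i_j})\in\mathcal V$. To verify expansivity for $f^r$, suppose $\{(f^r)^n(x),(f^r)^n(y)\}\prec\mathcal V$ for every $n\in\Z$. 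Then for each $n$ there is a tuple $(i_0^{(n)},\dots,i_{r-1}^{(n)})$ such that $f^{rn+j}(x),f^{rn+j}(y)\in U_{i_j^{(n)}}$ for every $0\leq j<r$. By the division algorithm, every $m\in\Z$ has a unique representation $m=rn+j$ with $0\leq j<r$, so $\{f^m(x),f^m(y)\}\prec\mathcal U$ for every $m\in\Z$, and o-expansivity of $f$ yields $x=y$.

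There is no real obstacle: the argument is purely combinatorial and uses only that $f^j$ is a homeomorphism for every $j\in\Z$. In particular no compactness or separation hypothesis is invoked, which matches the generality in which the proposition is stated.
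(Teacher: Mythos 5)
Your proof is correct and takes essentially the same route as the paper: both hinge on refining the o-expansive cover $\mathcal U$ for $f$ by the finite intersections $\bigl\{\bigcap_{j=0}^{r-1}f^{-j}(U_{i_j})\bigr\}$ (the paper writes the equivalent version with positive powers $f^j$), so that membership in one element of the refined cover controls a full block of $r$ consecutive iterates. You additionally spell out the easy converse direction and the reduction to $r\geq 1$ via $f^{-1}$, details the paper compresses into ``from this the result follows.''
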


\begin{proof}
  If $\mathcal U=\{U_1,\dots,U_n\}$ is an o-expansive covering of $f$ then $\bigl\{U_{k_1}\cap f(U_{k_2})\cap\cdots\cap f^{r-1}(U_{k_{r-1}}): k_1,k_2,\ldots,k_{r-1}\in\{1,\dots,n\}\bigr\}$ is an o-expansive covering for $f^r$. From this the result follows.
\end{proof}

\begin{prop}
  If $\mathcal U=\{U_1,\dots,U_l\}$ is an o-expansive covering for $f\colon X\to X$ then for each $n\geq 1$ there are at most $l^n$ points $p\in X$ such that $f^n(p)=p$. In particular, the set of periodic points is countable.
\end{prop}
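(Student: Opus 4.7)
The plan is to use the cardinality reformulation of orbit expansivity given in Remark \ref{dfexpcard}, namely that for every sequence $(k_j)_{j\in\Z}\in\{1,\dots,l\}^{\Z}$ the intersection $\bigcap_{j\in\Z}f^j(U_{k_j})$ contains at most one point. The strategy is to encode each fixed point of $f^n$ by a sequence in $\{1,\dots,l\}^{\Z}$ that is $n$-periodic, so that the coding alphabet has size at most $l^n$, and to show that the encoding is injective.

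Concretely, fix $n\geq 1$ and let $p\in X$ satisfy $f^n(p)=p$. For each $j\in\{0,1,\dots,n-1\}$ choose an index $k_j\in\{1,\dots,l\}$ with $f^{-j}(p)\in U_{k_j}$, which is possible because $\mathcal U$ covers $X$. Extend the finite tuple $(k_0,\dots,k_{n-1})$ to a doubly infinite sequence $(k_j)_{j\in\Z}$ by requiring $k_{j+n}=k_j$ for every $j$. The periodicity $f^{n}(p)=p$ gives $f^{-j-n}(p)=f^{-j}(p)$, so $f^{-j}(p)\in U_{k_j}$ holds for all $j\in\Z$. Equivalently, $p\in\bigcap_{j\in\Z}f^j(U_{k_j})$.

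Now I would invoke Remark \ref{dfexpcard}: since $\mathcal U$ is o-expansive, the intersection $\bigcap_{j\in\Z}f^j(U_{k_j})$ contains at most one point. Therefore the map sending a periodic point $p$ with $f^n(p)=p$ to its chosen coding tuple $(k_0,\dots,k_{n-1})\in\{1,\dots,l\}^n$ is injective: two distinct periodic points cannot produce the same coding, because they would then both lie in the same singleton intersection. Consequently the number of fixed points of $f^n$ is bounded by $\card(\{1,\dots,l\}^n)=l^n$.

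For the last sentence I would note that the set of all periodic points equals $\bigcup_{n\geq 1}\{p:f^n(p)=p\}$, a countable union of finite sets, hence countable. I do not anticipate any real obstacle; the only mild subtlety is choosing the right direction of indexing so that $n$-periodicity of $p$ is compatible with $n$-periodicity of the coding sequence, which is handled by coding via $f^{-j}(p)$ as above.
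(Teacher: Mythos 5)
Your proof is correct and is exactly the argument the paper has in mind when it says the result is ``an easy application of the Pigeonhole Principle'': coding each fixed point of $f^n$ by a tuple in $\{1,\dots,l\}^n$, extending $n$-periodically, and using Remark \ref{dfexpcard} to see that two points with the same code would lie in a common intersection of cardinality at most one. The injectivity formulation you use is just the contrapositive of the pigeonhole phrasing, so the approaches coincide.
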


\begin{proof}
  It is an easy application of the Pigeonhole Principle.
\end{proof}

\begin{prop}\label{propRestOexp}
  If $f\colon X\to X$ is orbit expansive, $Y\subseteq X$ and $f(Y)=Y$ then $f\colon Y\to Y$ is orbit expansive too.
\end{prop}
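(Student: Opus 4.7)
The plan is to pull back an o-expansive covering of $X$ to a covering of $Y$ by intersecting with $Y$, and then verify the definition directly. Concretely, let $\mathcal U=\{U_1,\dots,U_l\}$ be an o-expansive covering of $X$ for $f$, and set $V_i=U_i\cap Y$ for $i=1,\dots,l$. By the definition of the subspace topology each $V_i$ is open in $Y$, and since the $U_i$ cover $X$ the sets $V_i$ cover $Y$. The candidate o-expansive covering of $Y$ is $\mathcal V=\{V_1,\dots,V_l\}$.

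To verify the expansivity condition, take $x,y\in Y$ and suppose $\{f^n(x),f^n(y)\}\prec\mathcal V$ for all $n\in\Z$. The invariance hypothesis $f(Y)=Y$ combined with $f$ being a homeomorphism gives $f^{-1}(Y)=Y$, so $f^n(Y)=Y$ for every $n\in\Z$; in particular $f^n(x),f^n(y)\in Y$. Hence for each $n$ there exists $i_n\in\{1,\dots,l\}$ with $\{f^n(x),f^n(y)\}\subseteq V_{i_n}\subseteq U_{i_n}$, so $\{f^n(x),f^n(y)\}\prec\mathcal U$ holds for all $n\in\Z$. Applying orbit expansivity of $f$ on $X$ yields $x=y$, which is what we needed.

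There is no real obstacle here; the argument is a routine restriction, and the only thing to be careful about is using $f(Y)=Y$ to ensure that both forward and backward orbits of points of $Y$ remain in $Y$, so that the hypothesis $\{f^n(x),f^n(y)\}\prec\mathcal V$ (a statement internal to $Y$) transfers to the corresponding statement in $X$.
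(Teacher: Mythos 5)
Your proof is correct and is exactly the paper's argument: the paper's one-line proof ("the restriction to $Y$ of an o-expansive covering gives an o-expansive covering for $f\colon Y\to Y$") is precisely the intersection-with-$Y$ construction you carry out in detail. The verification that the condition $\{f^n(x),f^n(y)\}\prec\mathcal V$ lifts to $\{f^n(x),f^n(y)\}\prec\mathcal U$ is the intended (and only) content of that step.
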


\begin{proof}
  The restriction to $Y$ of an o-expansive covering gives an o-expansive covering for $f\colon Y\to Y$.
\end{proof}

\section{Refinement expansivity}\label{secRefExp}

 \par\noindent\par In this section we consider another kind of topological expansivity. As before, let us consider a homeomorphism $f\colon X\to X$ of a topological space $X$. We do not assume the compacity of $X$.

\begin{df}
  We say that $f$ is \emph{refinement expansive} if there exists an open cover $\mathcal U=\{U_1,\dots,U_n\}$ of $X$, called an \emph{r-expansivity covering}, such that for every open cover $\mathcal V$ and $(k_j)_{j\in\Z}\in\{1,\dots,n\}^{\Z}$ there exists an $N\in\N$ such that $\bigcap_{|j|\leq N}f^j(U_{k_j})\prec\mathcal V$.
\end{df}

 \par The purpose of the following examples and remarks is to show that it is desirable to assume that $X$ is $T_1$.

\begin{exa}\label{exaNoT0}
  Let us give a trivial example. Consider $X$ endowed with the topology $\tau=\{X,\emptyset\}$ and $f\colon X\to X$ an arbitrary homeomorphism (i.e., bijection). Trivially, $f$ is refinement expansive with r-expansivity covering $\mathcal U=\{X\}$. By Proposition \ref{propCovExpT1} we have that if $\card(X)>1$ then $f$ is not orbit expansive. 
\end{exa}

\begin{rmk}
  Given a topological space $(X,\tau)$ consider the equivalence relation given by $x\sim y$ if for all $U\in\tau$ it holds that $x,y\in U$ or $\{x,y\}\cap U=\emptyset$. The quotient space $\tilde X=X/\sim$ is a $T_0$ space. Every homeomorphism $f\colon X\to X$ induces a quotient homeomorphism $\tilde f$ of $\tilde X$. It is easy to see that $f$ is refinement expansive if and only if $\tilde f$ is refinement expansive. This means, we can assume that $X$ is $T_0$.
\end{rmk}

 \par However, even assuming that the space is $T_0$ we have \emph{undesired} examples.

\begin{exa}\label{exaNoT1}
  Consider $X=[0,1)$ with the topology $\tau_X=\{(a,1):1>a\geq0\}\cup\{X,\varnothing\}$ and $f\colon X\to X$ given by $f={\rm id}_X$. The space $X$ is compact, $T_0$, not $T_1$, $f$ is refinement expansive and has infinitely many fixed points. Also notice that $f$ is not orbit expansive.
\end{exa}

 \par In light of Example \ref{exaNoT1}, it is natural to assume that the space $X$ is $T_1$. In Theorem \ref{teoRefT1Cov} we will show that every refinement expansive homeomorphism on a $T_1$ space is orbit expansive.

\subsection{Uniform refinement expansivity}\label{secUnifExp}

 \par\noindent\par Consider a homeomorphism $f\colon X\to X$ of a topological space $X$. Given an open cover $\mathcal U=\{U_1,\dots,U_n\}$ of $X$ we define another open cover
   $$\mathcal U_{f,N}=\left\{\Bigcap_{|j|\leq N}f^j(U_{k_j}):(k_j)_{j\in\Z}\in\{1,\dots,n\}^{\Z}\right\}$$
 for any given $N\in\N$.

\begin{df}
  We say that $f$ is \emph{uniformly refinement expansive} if there exists a finite open cover $\mathcal U$ of $X$ such that for every open cover $\mathcal V$ there exist an $N\in\N$ such that $\mathcal U_{f,N}\prec\mathcal V$. In this case we say that $\mathcal U$ is a \emph{uniform r-expansivity covering}. 
\end{df}

\begin{rmk}
  This definition is related with the uniform expansivity in the compact metric case as was introduced in \cite{Br62}.
\end{rmk}

\begin{rmk}
  Clearly, every uniform r-expansivity covering is an r-expansivity covering. Consequently, uniform refinement expansivity implies refinement expansivity. The converse is our next result.
\end{rmk}

\begin{teo}\label{ref=>unifref}
  If $\mathcal U$ is an r-expansivity covering for $(X,f)$ then $\mathcal U$ is a uniform r-expansivity covering. If $f$ is refinement expansive, then $f$ is uniformly refinement expansive.
\end{teo}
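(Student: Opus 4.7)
The plan is to argue by compactness in the sequence space $\Sigma=\{1,\dots,n\}^{\Z}$ with the product topology. The key observation is that the only difference between refinement and uniform refinement expansivity is whether the integer $N$ may depend on the sequence $(k_j)$ or only on the refining cover $\mathcal V$. So, for each fixed $\mathcal V$, I will show that the set of sequences for which a given $N$ works is open and increasing in $N$, and then use compactness of $\Sigma$ (finitely many symbols, so Tychonoff gives a compact metrizable Cantor-type space) to extract a single uniform $N$.

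Concretely, fix an arbitrary open cover $\mathcal V$ of $X$ and, for each $N\in\N$, define
\[
  A_N=\Bigl\{(k_j)\in\Sigma\;:\;\Bigcap_{|j|\leq N}f^j(U_{k_j})\prec\mathcal V\Bigr\}.
\]
First I would check that $A_N$ is open in $\Sigma$: if $(k_j)\in A_N$, then any sequence $(k_j')$ agreeing with $(k_j)$ on $\{-N,\dots,N\}$ yields the same finite intersection $\bigcap_{|j|\leq N}f^j(U_{k_j'})$, so it lies in $A_N$ as well; this means $A_N$ contains the basic cylinder determined by $(k_{-N},\dots,k_N)$. Next, $A_N\subseteq A_{N+1}$, since enlarging the index range only shrinks the intersection and a subset of a set that is $\prec\mathcal V$ is still $\prec\mathcal V$. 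Finally, the hypothesis that $\mathcal U$ is an r-expansivity covering is exactly the statement that every sequence lies in some $A_N$, i.e., $\Sigma=\Bigcup_{N\in\N}A_N$.

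Since $\mathcal U$ is finite, $\Sigma$ is compact (Tychonoff, or directly: $\Sigma$ is homeomorphic to a closed subset of a Cantor space). The open cover $\{A_N\}_{N\in\N}$ has a finite subcover, and because the $A_N$ form an increasing chain there is a single $N_0$ with $A_{N_0}=\Sigma$. This says precisely that $\mathcal U_{f,N_0}\prec\mathcal V$, which is the definition of $\mathcal U$ being a uniform r-expansivity covering. The second statement of the theorem follows immediately: if $f$ is refinement expansive, pick any r-expansivity covering $\mathcal U$ and apply the first statement.

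The only subtle point is the openness of $A_N$, but it is purely combinatorial and independent of any separation hypothesis on $X$; the compactness needed is only the compactness of $\Sigma$, which comes for free from finiteness of $\mathcal U$. Thus I do not anticipate a genuine obstacle, and the proof is essentially a standard cylinder-set compactness argument of the kind familiar from symbolic dynamics.
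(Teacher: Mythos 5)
Your proof is correct. It does, however, take a genuinely different route from the paper's. The paper argues by contradiction: assuming $\mathcal U$ is not uniform, it takes for each $N$ a ``bad'' sequence $(k^N_j)$ witnessing $\bigcap_{|j|\leq N}f^j(U_{k^N_j})\not\prec\mathcal V$, and then extracts by a pigeonhole/diagonal process a single limiting sequence $(k_j)$ that is bad for every $N$, contradicting refinement expansivity. That is a hands-on sequential-compactness argument carried out coordinate by coordinate. You instead work directly (no contradiction) with the covering form of compactness of $\Sigma=\{1,\dots,n\}^{\Z}$: the sets $A_N$ are open unions of cylinders, increasing in $N$ by monotonicity of the intersections, and cover $\Sigma$ precisely because $\mathcal U$ is an r-expansivity covering, so a finite subcover plus the chain condition yields one $N_0$ with $A_{N_0}=\Sigma$, i.e.\ $\mathcal U_{f,N_0}\prec\mathcal V$. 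Both proofs rest on the same underlying fact --- compactness of the finite-alphabet shift space --- but yours makes that topological content explicit and avoids the double negation, at the cost of invoking Tychonoff (or the Cantor-space structure of $\Sigma$), whereas the paper's version is entirely elementary, using only the pigeonhole principle. All the steps you flag as needing verification (openness of $A_N$, monotonicity, the passage from a finite subcover to a single $N_0$) do go through exactly as you describe, and the deduction of the second sentence of the theorem from the first is immediate in both treatments.
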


\begin{proof}
  Arguing by contradiction assume that $\mathcal U=\{U_1,\dots,U_n\}$ is an r-expansive covering wich is not uniform. Then there exists an open covering $\mathcal V$ of $X$ such that $\U_{f,N}$ does not refine $\mathcal V$ for every $N\in\N$. That is, for all $N\in\N$ there exists $(k^N_j)_{j\in\Z}\in\{1,\dots,n\}^{\Z}$ such that 
  \begin{equation}\label{eqRef}
    \bigcap_{|j|\leq N}f^j(U_{k^N_j})\not\prec\mathcal V. 
  \end{equation}
  We will show that there is a sequence $(k_j)_{j\in\Z}\in\{1,\dots,n\}^{\Z}$ so that 
  \begin{equation}\label{eqRef2}
    \bigcap_{|j|\leq N}f^j(U_{k_j})\not\prec\mathcal V\hbox{ for any }N\in\N, 
  \end{equation}
  wich means that $\U$ si not a r-expansivity covering. This contradiction will prove the theorem. In order to define the sequence $(k_j)_{j\in\Z}$ take $k_0\in\{1,\dots,n\}$ such that the set $\{N\in\N:k^N_0=k_0\}$ is infinite. For $j=\pm 1$ take $k_1,k_{-1}\in\{1,\dots,n\}$ such that the set $\{N\in\N:k^N_j=k_j\hbox{ if } |j|\leq 1\}$ is infinite. Having defined $k_j$ for $|j|\leq j'$, take $k_{j'+1},k_{-j'-1}\in\{1,\dots,n\}$ such that the set $\{N\in\N:k^N_j=k_j\hbox{ if } |j|\leq j'+1\}$ is infinite. Inductively, this defines a sequence $(k_j)_{j\in\Z}$ satisfying
  \begin{equation}\label{ecuCard}
    \card\{M\in\N:k^M_j=k_j\text{ if } |j|\leq N\}=\infty
  \end{equation}
  for all $N\in\N$. Suppose that $\bigcap_{|j|\leq N_0}f^j(U_{k_j})\prec\mathcal V$ for some $N_0\in\N$. Then, by (\ref{ecuCard}), there is an $M\geq N_0$ such that $k^{M}_j=k_j$ for all $|j|\leq N_0$. Therefore, $\bigcap_{|j|\leq N_0}f^j(U_{k^{M}_j})\prec\mathcal V$. This implies that $\bigcap_{|j|\leq M}f^j(U_{k^{M}_j})\prec\mathcal V$ wich in turn contradicts condition (\ref{eqRef}). As we said, this finishes the proof.
\end{proof}

 \par In Example \ref{exaNoCompact} we proved that a non-compact space may admit an orbit expansive homeomorphism. As a consequence of the uniformity we now obtain that this is not the case for refinement expansive homeomorphisms.

\begin{cor}
  If $X$ admits a refinement expansive homeomorphism then $X$ is compact.
\end{cor}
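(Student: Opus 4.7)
The plan is to prove compactness directly from the definition of uniform refinement expansivity, which is available to us for free thanks to Theorem \ref{ref=>unifref}. Starting from an r-expansivity covering $\mathcal U=\{U_1,\dots,U_n\}$, I would pass immediately to the uniform version, so that $\mathcal U$ has the property that every open cover $\mathcal V$ of $X$ is refined by $\mathcal U_{f,N}$ for some $N\in\N$.

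Next I would observe two elementary facts about $\mathcal U_{f,N}$. First, it is an open cover of $X$: for any $x\in X$, choosing for each $|j|\leq N$ an index $k_j$ with $f^{-j}(x)\in U_{k_j}$ (which exists because $\mathcal U$ covers $X$) places $x$ in $\bigcap_{|j|\leq N}f^j(U_{k_j})$. Second, $\mathcal U_{f,N}$ is finite: although the index set is $\{1,\dots,n\}^{\Z}$, the intersection $\bigcap_{|j|\leq N}f^j(U_{k_j})$ depends only on the finite tuple $(k_{-N},\dots,k_N)$, so there are at most $n^{2N+1}$ distinct sets in $\mathcal U_{f,N}$.

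With these two facts, the compactness argument is immediate. Given an arbitrary open cover $\mathcal V$ of $X$, apply uniform refinement expansivity to obtain $N$ with $\mathcal U_{f,N}\prec\mathcal V$. For each of the finitely many sets $W\in\mathcal U_{f,N}$ pick some $V_W\in\mathcal V$ containing $W$; then $\{V_W:W\in\mathcal U_{f,N}\}$ is a finite subcover of $\mathcal V$, proving that $X$ is compact.

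There is essentially no obstacle here: the whole content of the corollary is packed into Theorem \ref{ref=>unifref}, and once uniformity is in hand the rest is the standard observation that refinement by a finite cover extracts a finite subcover. The only thing to be careful about is not to confuse $\mathcal U_{f,N}$ (indexed by bi-infinite sequences but actually finite as a family of sets) with an a priori infinite family.
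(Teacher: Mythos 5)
Your proposal is correct and follows exactly the paper's own argument: invoke Theorem \ref{ref=>unifref} to upgrade to uniform refinement expansivity, then extract a finite subcover of an arbitrary $\mathcal V$ from the finite refining cover $\mathcal U_{f,N}$. The only difference is that you spell out explicitly why $\mathcal U_{f,N}$ is a finite open cover, which the paper takes as evident.
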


\begin{proof}
  By Theorem \ref{ref=>unifref} we know that $f$ is uniformly refinement expansive. Let $\mathcal U$ be a uniform r-expansivity cover of $X$. Given an arbitrary open cover $\V$ of $X$ take $N\in\N$ such that $\U_{f,N}$ refines $\V$. Then, as $\U_{f,N}$ is a finite cover of $X$, $\V$ has a finite subcover. This proves the compacity of $X$.
\end{proof}

 \par In Proposition \ref{propRestOexp} we showed that the restriction of an orbit expansive homeomorphism to an invariant subset is again orbit expansive. This is not the case for refinement expansivity because such invariant subset may not be compact. This motivates the introduction of the concept of \emph{extension closed} subspaces from \cite{Harris}.

\begin{df}
  A subspace $Y\subset X$ is \emph{extension closed} if every open cover of $Y$ extends to an open cover of $X$. This means, if $\{U_i\}_{i\in I}$ is an open cover of $Y$ (with the subspace topology) then there is an open cover $\{V_i\}_{i\in I}$ of $X$ such that $U_i=V_i\cap Y$.
\end{df}

 \par As noticed in \cite{Harris}, every closed subset is extension closed. Also: 1) on Hausdorff spaces the concept of \emph{extension closed} and \emph{closed} coincides and 2) an extension closed subspace of a compact space is compact.

\begin{prop}
  If $f\colon X\to X$ is refinement expansive and $Y\subset X$ is extension closed and $f(Y)=Y$ 
  then $f\colon Y\to Y$ is refinement expansive.
\end{prop}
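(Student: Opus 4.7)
The plan is to take a uniform r-expansivity covering of $X$ (which exists by Theorem \ref{ref=>unifref}) and show that its trace on $Y$ is an r-expansivity covering for $f\colon Y\to Y$. The extension closed hypothesis is what lets us pull a cover of $Y$ back to a cover of $X$ where we can apply expansivity, and the $f$-invariance of $Y$ is what lets intersections commute with restriction.

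Concretely, let $\mathcal U=\{U_1,\dots,U_n\}$ be a uniform r-expansivity covering of $X$ and set $\mathcal U|_Y=\{U_1\cap Y,\dots,U_n\cap Y\}$, which is a finite open cover of $Y$. Given any open cover $\mathcal V'=\{V'_i\}_{i\in I}$ of $Y$, use the extension closed property to choose an open cover $\mathcal V=\{V_i\}_{i\in I}$ of $X$ with $V_i\cap Y=V'_i$. By uniform refinement expansivity there exists $N\in\N$ such that $\mathcal U_{f,N}\prec\mathcal V$; in other words, for every sequence $(k_j)_{j\in\Z}\in\{1,\dots,n\}^\Z$ there is some $i\in I$ with
$$\bigcap_{|j|\leq N}f^j(U_{k_j})\subseteq V_i.$$

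Intersecting both sides with $Y$ and using that $f(Y)=Y$ (so $f^j(Y)=Y$ for every $j\in\Z$, whence $f^j(U_{k_j})\cap Y=f^j(U_{k_j}\cap Y)$), we obtain
$$\bigcap_{|j|\leq N}f^j(U_{k_j}\cap Y)\subseteq V_i\cap Y=V'_i,$$
which is exactly the statement that $(\mathcal U|_Y)_{f,N}\prec\mathcal V'$. Hence $\mathcal U|_Y$ is a (uniform) r-expansivity covering for $f\colon Y\to Y$, and $f|_Y$ is refinement expansive.

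I expect no real obstacle: the only delicate point is the identity $f^j(U_{k_j})\cap Y=f^j(U_{k_j}\cap Y)$, which needs the two-sided invariance $f(Y)=Y$, and the use of the extension closed property to make sense of applying expansivity on all of $X$ to a cover that only lives on $Y$. Both are given. The appeal to Theorem \ref{ref=>unifref} is what makes the single $N$ work simultaneously for every sequence of indices, which is crucial since the argument above must handle all $(k_j)$ at once to produce a refinement relation between covers.
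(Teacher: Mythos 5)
Your proof is correct and follows essentially the same route as the paper: restrict the covering to $Y$, extend a given cover of $Y$ to $X$ via the extension closed hypothesis, apply expansivity on $X$, and intersect back with $Y$ using the invariance $f(Y)=Y$. The only difference is your detour through Theorem \ref{ref=>unifref} to get one $N$ for all sequences at once; this is harmless (and shows the restriction is even uniformly refinement expansive) but unnecessary, since the definition of an r-expansivity covering only requires an $N$ for each individual sequence $(k_j)_{j\in\Z}$, which is how the paper argues.
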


\begin{proof}
  If $\mathcal U=\{U_1,\dots,U_l\}$ is an r-expansive covering and $Y\subset X$ is invariant and extension closed, consider $\mathcal U^Y=\{U^Y_1,\dots,U^Y_l\}$ where $U^Y_i=U_i\cap Y$. We will show that $\mathcal U^Y$ is an r-expansive covering for $f\colon Y\to Y$. For this, take a covering $\mathcal V^Y$ of $Y$. Since $Y$ is extension closed, we can take a covering $\mathcal V$ of $X$ extending $\mathcal V^Y$. Now, given $(k_j)_{j\in\Z}\in\{1,\dots,l\}^{\Z}$ we know that there exists an $N\in\N$ such that $\bigcap_{|j|\leq N}f^j(U_{k_j})\prec\mathcal V$. This implies that $\bigcap_{|j|\leq N}f^j(U^Y_{k_j})\prec\mathcal V^Y$ and the proof ends.
\end{proof}

\subsection{Refinement expansivity on $T_1$ spaces}\label{secRefImpOrb}

 \par\noindent\par We know that refinement expansivity does not imply orbit expansivity on arbitrary compact topological spaces, see Example \ref{exaNoT1}.

\begin{teo}\label{teoRefT1Cov}
  If $f\colon X\to X$ is refinement expansive and $X$ is a $T_1$ space, then $f$ is orbit expansive. Moreover, $X$ has a countable basis for its topology.
\end{teo}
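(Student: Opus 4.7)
My plan is to deduce both conclusions directly from the definitions, using the uniform refinement expansivity established in Theorem \ref{ref=>unifref}. Fix an r-expansivity covering $\mathcal U = \{U_1, \dots, U_n\}$; by Theorem \ref{ref=>unifref} it is automatically a uniform r-expansivity covering, which is what I will need for the basis argument.

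For the orbit expansivity, I would show that this same $\mathcal U$ is an o-expansive covering. Suppose not: then there exist distinct $x, y \in X$ and indices $k_j \in \{1, \dots, n\}$ with $\{f^{-j}(x), f^{-j}(y)\} \subseteq U_{k_j}$ for every $j \in \Z$, equivalently $\{x, y\} \subseteq \bigcap_{|j| \leq N} f^j(U_{k_j})$ for every $N \in \N$. Since $X$ is $T_1$, the singletons $\{x\}$ and $\{y\}$ are closed, so $\mathcal V = \{X \setminus \{x\}, X \setminus \{y\}\}$ is an open cover of $X$. Applying r-expansivity of $\mathcal U$ to this $\mathcal V$ and the sequence $(k_j)$ yields some $N$ with $\bigcap_{|j| \leq N} f^j(U_{k_j}) \prec \mathcal V$; but no member of $\mathcal V$ contains both $x$ and $y$, contradicting that $\{x,y\}$ lies in the intersection.

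For the countable basis, I would propose $\mathcal B = \bigcup_{N \in \N} \mathcal U_{f, N}$. Each $\mathcal U_{f, N}$ contains at most $n^{2N+1}$ distinct sets, so $\mathcal B$ is countable. To see it is a basis, take $x \in X$ and an open neighborhood $W$ of $x$; using $T_1$ once more, $\mathcal V = \{W, X \setminus \{x\}\}$ is an open cover of $X$. By uniform r-expansivity there is $N$ with $\mathcal U_{f, N} \prec \mathcal V$, and since $\mathcal U_{f, N}$ covers $X$ some $B \in \mathcal U_{f, N}$ contains $x$; such a $B$ refines into $\mathcal V$ and contains $x$, so $B \subseteq W$, as required.

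The proof is essentially a single idea applied twice: use $T_1$ to separate two distinguished points by their complements and feed the resulting two-element open cover into refinement expansivity. The only point needing care is the index shift when passing from $\{f^n(x), f^n(y)\} \subseteq U_{k_n}$ to $\{x, y\} \subseteq \bigcap_{|j| \leq N} f^j(U_{k_j})$, and the use of the \emph{uniform} version (rather than the bare definition) in the basis step, to ensure a single $N$ works for \emph{every} element of $\mathcal U_{f,N}$. Beyond these bookkeeping points I do not foresee a genuine obstacle.
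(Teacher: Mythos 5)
Your proof is correct and follows essentially the same route as the paper: both parts hinge on feeding the $T_1$-covers $\{X\setminus\{x\},X\setminus\{y\}\}$ and $\{W,X\setminus\{x\}\}$ into refinement expansivity, and the countable basis is the same family $\bigcup_{N\in\N}\mathcal U_{f,N}$. The only cosmetic difference is in the basis step, where the paper avoids invoking Theorem \ref{ref=>unifref} by choosing the specific sequence $(k_j)$ with $f^{-j}(x)\in U_{k_j}$ and applying the bare definition of refinement expansivity, whereas you appeal to the uniform version; both are valid.
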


\begin{proof}
  We start proving that $f$ is orbit expansive. Let $\mathcal U=\{U_1,\dots,U_n\}$ be an r-expansivity cover of $X$. Given any $(k_j)_{j\in\Z}\in\{1,\dots,n\}^{\Z}$ and two distinct points $x$ and $y$, consider the open cover $\mathcal V=\{X\backslash\{x\},X\backslash\{y\}\}$ (points are closed because $X$ is $T_1$) and $N\in\N$ such that $\mathcal U_N$ refines $\mathcal V$. Then $\{x,y\}\nsubseteq\bigcap_{|j|\leq N}f^j(U_{k_j})$, hence $\{x,y\}\nsubseteq\bigcap_{j\in\Z}f^j(U_{k_j})$. As $x,y$ are arbitrary, we conclude that $\card\bigl(\bigcap_{j\in\Z}f^j(U_{k_j})\bigr)\leq1$, and this is true for all $(k_j)_{j\in\Z}\in\{1,\dots,n\}^{\Z}$. Then $f$ is orbit expansive by Remark \ref{dfexpcard}.

 \par Now we show that $X$ has a countable basis. Let $\mathcal U=\{U_1,\dots,U_n\}$ be an r-expansive covering for $X$. Given a neighborhood $V$ of a point $x\in X$, consider the open cover $\mathcal V=\{V,X\backslash\{x\}\}$. Define a sequence $(k_j)\in\{1,\dots, n\}^{\Z}$ taking for each $j\in\Z$ a $k_j\in\{1,\dots, n\}$ such that $f^{-j}(x)\in U_{k_j}$. By refinement expansivity, for this sequence there is an $N\in\N$ such that $\bigcap_{|j|\leq N}f^j(U_{k_j})$ refines $\mathcal V$. As $x\in\bigcap_{|j|\leq N}f^j(U_{k_j})$ the later open set is not included in $X\backslash\{x\}$, so $x\in\bigcap_{|j|\leq N}f^j(U_{k_j})\subseteq V$. Therefore, the countable family $\bigcup_{N\in\N}\mathcal U_{f,N}$ is a basis for the topology.
\end{proof} 

 \par Finally, we show that a compact $T_1$ space with a countable basis admitting a refinement expansive homeomorphism may fail to be metrizable.

\begin{exa}\label{exadupli}
Let $(X,\tau)$ be a compact $T_1$ space and $f\colon X\to X$ a refinement expansive homeomorphism with a fixed point $x_0$. For example, we can take $f$ to be a metric expansive homeomorphism (as a symbolic shift map or an Anosov diffeomorphism of a compact manifold). Considering a new point $x_1\notin X$ we define $\bar X=X\cup \{x_1\}$, equipped with the topology $\bar\tau=\tau \cup\{ W\cup \{x_1\}:\ x_0\in W, \ W\in\tau\}\cup\{(W\setminus\{x_0\})\cup \{x_1\}: \ x_0\in W, \ W\in\tau\}$. It is easy to prove that $\bar X$ is $T_1$. Moreover, $\bar X$ is not Hausdorff if $x_0$ is not an isolated point of $X$.

 \par Let $g\colon \bar X\to\bar X$ be the function defined by $g(x)=f(x)$ if $x\in X$, $g(x_1)=x_1$. It is easy to see that $g$ is a homeomorphism. We will show that $g$ is refinement expansive. Since $f$ is refinement expansive, there exists $\mathcal U=\{U_1,\ldots, U_n\}$ an r-expansivity covering for $f$. Suppose $x_0\in U_n$ and define the open set $U_{n+1}=(U_n\setminus\{x_0\})\cup \{x_1\}$. We will prove that $\mathcal Z=\{U_1, \ldots,U_n,U_{n+1}\}$ is an r-expansivity covering for $g$.

 \par Suppose $\mathcal V=\{V_1,\ldots,V_r\}$ is an arbitrary covering of $\bar X$, with $x_0\in V_{r-1}$ and $x_1\in V_r$. For each $i=1,\ldots, r$ take $W_i=V_i\setminus\{x_0,x_1\}$ and define $W_{r+1}=(V_{r-1}\cap V_r)\cup\{x_0\}$. Then $\mathcal W=\{W_1,\ldots,W_r,W_{r+1}\}$ is a covering of $X$. Since $f$ is refinement expansive, there exists $N\in \N$ such that $\mathcal U_{f,N}\prec \mathcal W$. It only remains to prove that $\mathcal Z_{f,N}\prec \mathcal V$. To do this, observe at first that $\{x_0,x_1\}\not\subseteq g^j(U_k)$ for all $j\in\Z$ and $k=1,\ldots,n+1$ (recall that $x_0$ and $x_1$ are fixed points of $g$). Consider a sequence $(k_j)_{j\in\Z}\in\{1,\dots,n+1\}^{\Z}$. We first suppose that
 \begin{equation}\label{ecuNacho}
   x_1\notin\bigcap_{|j|\leq N}g^j(U_{k_j}).
 \end{equation}
 Notice that each time $k_j=n+1$ we can replace $k_j$ by $n$ in (\ref{ecuNacho}) and the intersection (\ref{ecuNacho}) will be the same. Therefore, 
   $$\bigcap_{|j|\leq N}g^j(U_{k_j})=\bigcap_{|j|\leq N}f^j(U_{k_j})\prec \mathcal W.$$ 
 This and (\ref{ecuNacho}) imply $\bigcap_{|j|\leq N}g^j(U_{k_j})\subseteq V_l$ for some $l<r$. Now assume that (\ref{ecuNacho}) does not hold. Then $k_j=n+1$ for all $j$ and 
   $$\bigcap_{|j|\leq N}g^j(U_{k_j})=\left[\bigcap_{|j|\leq N}f^j(U_{n})\setminus\{x_0\}\right]\cup\{x_1\}.$$
 In this case $\bigcap_{|j|\leq N}f^j(U_{n})$ must be contained in $W_{r+1}$ and $\bigcap_{|j|\leq N}g^j(U_{k_j})\subseteq V_r$. This proves that $g$ is refinement expansive.
\end{exa}

 \par We can generalize this example as follows. Let $(X,\tau)$ be a compact $T_1$ space, $f\colon X\to X$ a refinement expansive homeomorphism, and $K\subseteq X$ a closed invariant subset. We define $\bar X=X\cup \{(k,1): k\in K\}$ (we add a copy of $K$), equipped with the topology 
   $$\bar\tau=\tau \cup\{ W\cup ((W\cap K)\times\{1\}):\ W\in\tau\}\cup \{(W\setminus K)\cup ((W\cap K)\times\{1\}): \ W\in\tau\}.$$
 We define $f_K\colon \bar X\to\bar X$ as $f_K(x)=f(x)$ if $x\in X$, $f_K(z,1)=(f(z),1)$ if $z\in K$. It can be proven, as we did before, that $f_K$ is a refinament expansive homeomorphism.

 \par It would be interesting to know if every refinement expansive homeomorphism of a compact $T_1$ space is obtained in this way, starting with a metric expansive homeomorphism of a compact metric space.

\subsection{Further questions}

 \par\noindent\par Several questions remain open. For example, it is known that if a compact metric space $X$ admits a positive expansive homeomorphism then $X$ is a finite set. Is this result true for a positive refinement expansive homeomorphism on a $T_1$ space?
 
 \par In the metric case it is known that no one-dimensional manifold (the circle and the interval) admit an expansive homeomorphism. In the non-Hausdorff setting, there are other compact \emph{one-dimensional manifolds}. Consider for example two circles $C_1,C_2$ with two open subsets $U_1\subset C_1$, $U_2\subset C_2$ and a homeomorphism $h\colon U_1\to U_2$. Consider the space $X=(C_1\cup C_2)/x\sim h(x)$ for $x\in U_1$. Does such non-Hausdorff one-dimensional manifolds admit refinement expansive homeomorphisms?

\begin{bibdiv}
\begin{biblist}

\bib{Br60}{article}{
author={B. F. Bryant},
title={On Expansive Homeomorphisms},
journal={Pacific J. Math},
volume={10},
year={1960},
pages={1163--1167}}

\bib{Br62}{article}{
author={B. F. Bryant},
title={Expansive Self-Homeomorphisms of a Compact Metric Space},
journal={Amer. Math. Monthly},
volume={69},
year={1962},
pages={386--391}}


\bib{DLRW}{article}{
title={Spectral decomposition for topologically Anosov homeomorphisms on noncompact and non-metrizable spaces},
author={T. Das},
author={K. Lee},
author={D. Richeson},
author={J. Wiseman}, 
journal={Topology and its Applications},
volume={160}, 
year={2013}, 
pages={149--158}}

\bib{Fried}{article}{
author={D. Fried},
title={Finitely presented dynamical systems},
journal={Ergod. Th. Dynam. Sys.},
year={1987},
volume={7},
pages={489--507}}

\bib{Harris}{article}{
author={D. Harris},
title={Universal compact $T_1$ spaces},
journal={General Topology and its Applications},
volume={3}, 
year={1973}, 
pages={291--318}}
    
\bib{HY}{book}{
author={J. Hocking},
author={G. Young},
title={Topology},
publisher={Addison-Wesley Publishing Company},
year={1961}}

\bib{KR}{article}{
author={H. Keynes},
author={J. Robertson},
title={Generators for topological entropy and expansiveness},
journal={Mathematical systems theory},
volume={3},
year={1969},
pages={51--59}}


\bib{Utz}{article}{
author={W. R. Utz},
title={Unstable homeomorphisms},
journal={Proc. Amer. Math. Soc.},
year={1950},
volume={1},
number={6},
pages={769--774}}

\end{biblist}
\end{bibdiv}

\end{document}